\numberwithin{figure}{section}
\theoremstyle{plain}
\newtheorem{thm}{Theorem}[section]
\newtheorem{lem}[thm]{Lemma}
\newtheorem{prop}[thm]{Proposition}
\newtheorem{cor}{Corollary}[thm]
\theoremstyle{definition}
\theoremstyle{remark}
\title{Some characterizations of gradient Yamabe solitons}
\author[A. A. Shaikh, P. Mandal]{Absos Ali Shaikh$^{*1}$, Prosenjit Mandal$^{2}$}
\address{\noindent\newline  Department of Mathematics,\newline University of
Burdwan, Golapbag,\newline Burdwan-713104,\newline West Bengal, India}
\email{$^1$aask2003@yahoo.co.in, aashaikh@math.buruniv.ac.in}
\email{$^2$prosenjitmandal235@gmail.com}
\begin{document}
\begin{abstract}
In this article we have proved that a gradient Yamabe soliton satisfying some additional conditions must be of constant scalar curvature. Later, we have showed that in a gradient expanding or steady Yamabe soliton with non-negative Ricci curvature if the potential function satisfies some integral condition then it is subharmonic, in particular, for steady case the potential function becomes harmonic. Also we have proved that, in  a compact gradient Yamabe soliton, the  potential function agrees with the Hodge-de Rham potential upto a constant.
\end{abstract}
\noindent\footnotetext{
$\mathbf{2020}$\hspace{5pt}Mathematics\; Subject\; Classification: 53C20; 53C21.\\ 
{Key words and phrases: gradient Yamabe soliton; scalar curvature; harmonic function; Hodge-de Rham; Riemannian manifold.} }
\maketitle
\section*{Introduction}
Yamabe solitons are special solutions to the Yamabe flow which is introduced by R. S. Hamilton \cite{RS1988} in the late 20th century. Let $(M,g)$ be a Riemannian manifold of dimension n, $(n \geq 2)$, such that $\{g(t)\}$ is the 1-parameter family of metrics and $R(t)$ is its scalar curvature. Then the equation of Yamabe flow is given by
\begin{equation*}
\frac{\partial g(t)}{\partial t} = -R(t)g(t).
\end{equation*}
A Riemannian manifold $(M,g,\lambda)$ of dimension n, ($n\geq 2$), which is connected, is called a Yamabe soliton if it satisfies
\begin{equation}\label{r10}
\frac{1}{2}\pounds_Xg = (R-\lambda)g,
\end{equation}
where $\pounds_Xg$ is the Lie derivative of the metric g with respect to the smooth vector field X, $\lambda$ is a constant and $R$ denotes the scalar curvature of $M$. A Yamabe soliton  $(M,g,\lambda)$ is called shrinking, steady,  and expanding Yamabe soliton if $\lambda >0$, $\lambda=0,$ and $\lambda <0$, respectively.
If there exists a smooth function $f : M \rightarrow \mathbb{R}$ such that $X= \nabla f$, then the Yamabe soliton is called a gradient Yamabe soliton. We write a gradient Yamabe soliton by $(M, g, f, \lambda)$, and in this case, $(\ref{r10})$ takes the form
\begin{equation}\label{r11}
\nabla^2 f = (R-\lambda)g, 
\end{equation}
where $\nabla^2 f$ denotes the Hessian of the potential function $f$. Moreover, when the vector field X is trivial or the potential function $f$ is constant, then Yamabe soliton will be called trivial, otherwise it will be a non-trivial Yamabe soliton. Taking the trace of equation $(\ref{r11})$ we get
\begin{equation}\label{r12}
\Delta f = (R-\lambda)n.
\end{equation}
In tensors of local coordinate system, the equation $(\ref{r11})$ can be written as
\begin{equation}\label{r13}
\nabla_i \nabla_j f = (R- \lambda)g_{ij}.
\end{equation}  
\indent For a given vector field X on a compact oriented Riemannian manifold $M$, the Hodge-de Rham decomposition theorem, (see e.g. \cite{WF1983}) states that we may decompose X as the sum of a divergence free vector field and gradient of a function h. Hence we set
\begin{equation}
X=Y+\Delta h,
\end{equation}
where divY=0. Here the function $h$ is known as Hodge-de Rham potential. Recently many authors have been studied Yamabe soliton, (see \cite{GA2014, HXY2012, PN2013, SYH2012, LV2012, SM2019} and also the references therein). It is proved, in \cite{PN2013}, that a compact gradient Yamabe soliton $(M, g, f, \lambda )$ is of constant scalar curvature. Also in \cite{SYH2012}, Shu-Yu Hsu provides an alternative proof of this result. In the present paper we omit the compactness condition and with the help of some other conditions, we have proved that the scalar curvature is constant.
Zhu-Hong Zhang \cite{ZHZ2009} proved a result depending on scalar curvature and potential function for gradient Ricci solitons. Following the way of Zhu-Hong Zhang \cite{ZHZ2009}, in this paper we have proved a result depending on scalar curvature and potential function for gradient Yamabe soliton.\newline
\indent At first we have proved that in a gradient Yamabe soliton with potential function bounded below and the manifold satisfying linear volume growth has constant scalar curvature. In the last section we have showed that the potential function in a gradient expanding or steady Yamabe soliton satisfying some integral condition is subharmonic. 
\section{Gradient Yamabe soliton with constant scalar curvature }
\begin{lem}\label{r17}\cite{SC10}
Let $f$ be a non-negative subharmonic function in $B(q,2r)$ then the following inequality holds
 \begin{equation}\label{r16}
 \int_{B(q,r)}|\nabla f|^2 \leq\frac{C}{r^2}\int_{B(q,2r)}f^2,
 \end{equation}
 where $B(q,r)$ is a ball with radius $r>0$ and center at $q$ and C is a real constant.
\end{lem}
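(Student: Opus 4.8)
The plan is to obtain \eqref{r16} by the classical Caccioppoli (reverse Poincar\'e) trick: test the subharmonicity inequality against a function of the form $\varphi^{2}f$, where $\varphi$ is a cut-off adapted to the two concentric balls, and then absorb the resulting cross term by Young's inequality. First I would fix the cut-off. Writing $\rho(x)=d(x,q)$ for the distance function to $q$, which is Lipschitz with $|\nabla\rho|\le 1$ almost everywhere, I choose a smooth $\eta:[0,\infty)\to[0,1]$ with $\eta\equiv 1$ on $[0,1]$, $\eta\equiv 0$ on $[2,\infty)$ and $|\eta'|\le 2$, and set $\varphi=\eta(\rho/r)$. Then $\varphi$ is Lipschitz, $\varphi\equiv 1$ on $B(q,r)$, $\operatorname{supp}\varphi\subset\overline{B(q,2r)}$, and $|\nabla\varphi|\le 2/r$ almost everywhere, with the implied constant independent of $q$, $r$ and the geometry of $M$.

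Since $f\ge 0$ and $\Delta f\ge 0$, the function $\varphi^{2}f\ge 0$ is an admissible compactly supported Lipschitz test function, and integrating $\varphi^{2}f\,\Delta f\ge 0$ by parts — the boundary contribution vanishing because $\varphi$ has compact support — gives
\begin{equation*}
0\le \int \varphi^{2}f\,\Delta f = -\int \langle \nabla(\varphi^{2}f),\nabla f\rangle = -\int \varphi^{2}|\nabla f|^{2} - 2\int \varphi f\,\langle \nabla\varphi,\nabla f\rangle .
\end{equation*}
Hence $\int \varphi^{2}|\nabla f|^{2}\le 2\int \varphi\,|\nabla f|\cdot f\,|\nabla\varphi|$, and applying the Young inequality $2ab\le \tfrac12 a^{2}+2b^{2}$ with $a=\varphi|\nabla f|$, $b=f|\nabla\varphi|$ yields $\int \varphi^{2}|\nabla f|^{2}\le \tfrac12\int\varphi^{2}|\nabla f|^{2}+2\int f^{2}|\nabla\varphi|^{2}$. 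Absorbing the first term on the right and using $|\nabla\varphi|\le 2/r$ together with $\varphi\equiv 1$ on $B(q,r)$, I get
\begin{equation*}
\int_{B(q,r)}|\nabla f|^{2}\le \int \varphi^{2}|\nabla f|^{2}\le 4\int f^{2}|\nabla\varphi|^{2}\le \frac{16}{r^{2}}\int_{B(q,2r)}f^{2},
\end{equation*}
which is \eqref{r16} with $C=16$.

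The only point that needs care is the justification of the integration by parts in the first display when $f$ is merely a (weak) subharmonic function rather than $C^{2}$; I would handle this by a routine mollification of $f$, or simply by recalling that for weak subsolutions $\varphi^{2}f$ is precisely the class of test functions on which the notion of subharmonicity is tested, so no extra regularity is required. I would also note that the hypothesis $f\ge 0$ is used exactly here: it guarantees that $\varphi^{2}f\ge 0$ is an admissible test function in the weak formulation (for $f$ of arbitrary sign the argument fails). Everything else is elementary, and the constant $C$ depends only on the choice of $\eta$.
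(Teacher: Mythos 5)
Your proof is correct: the cut-off, the test function $\varphi^{2}f$, integration by parts, and absorption via Young's inequality give exactly the Caccioppoli-type bound \eqref{r16} with an explicit constant, and you rightly flag both where $f\ge 0$ enters and how to handle weak (non-$C^{2}$) subharmonicity. The paper itself offers no proof — it cites the lemma from Schoen--Yau — and your argument is essentially the standard one found there, so there is nothing to reconcile.
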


\begin{thm}\label{r5}
Let $(M,g,f,\lambda)$ be a gradient Yamabe soliton of dimension n, with the potential function $f\geq K $ for some constant $K>0$, R$\leq\lambda$ and $M$ be of linear volume growth. Then $M$ must be of constant scalar curvature.
\end{thm}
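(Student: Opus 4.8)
The plan is to build from $f$ a bounded, non-negative \emph{subharmonic} function, apply Lemma \ref{r17} to it on geodesic balls, and let the radius tend to infinity, using the linear volume growth to kill the right-hand side.

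First I would use the trace of the soliton equation: by (\ref{r12}) we have $\Delta f = n(R-\lambda)$, and since $R\le\lambda$ everywhere this gives $\Delta f\le 0$, i.e.\ $f$ is superharmonic. Because $f\ge K>0$, the function $u:=1/f$ is well defined, smooth on all of $M$, and satisfies $0<u\le 1/K$. A direct computation gives $\Delta u = -f^{-2}\Delta f + 2f^{-3}|\nabla f|^2$; the first term is $\ge 0$ because $\Delta f\le 0$, and the second is $\ge 0$ because $f>0$. Hence $u$ is a non-negative, uniformly bounded subharmonic function on $M$, so Lemma \ref{r17} applies to it.

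Next, fixing $q\in M$ and applying Lemma \ref{r17} to $u$ on $B(q,2r)$ yields $\int_{B(q,r)}|\nabla u|^2\le \frac{C}{r^2}\int_{B(q,2r)}u^2\le \frac{C}{K^2 r^2}\,\mathrm{Vol}(B(q,2r))$. The linear volume growth hypothesis provides a constant $c>0$ and radii $r\to\infty$ along which $\mathrm{Vol}(B(q,2r))\le c\,r$, so that $\int_{B(q,r)}|\nabla u|^2\le \frac{cC}{K^2}\cdot\frac1r$. Letting $r\to\infty$ along this sequence, and using that $r\mapsto\int_{B(q,r)}|\nabla u|^2$ is nondecreasing, we obtain $\int_M|\nabla u|^2=0$, hence $\nabla u\equiv 0$. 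Since $M$ is connected, $u$ and therefore $f$ is constant, and then (\ref{r11}) forces $(R-\lambda)g\equiv 0$, i.e.\ $R\equiv\lambda$; thus $M$ has constant scalar curvature.

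I expect the only delicate points to be: (i) checking carefully that $u=1/f$ is subharmonic on \emph{all} of $M$, which is precisely where $f\ge K>0$ is used (it simultaneously makes $u$ smooth and bounded); and (ii) the passage $r\to\infty$, where one must take the radii supplied by the definition of linear volume growth and exploit monotonicity of the integral in $r$. Everything else is a routine application of Lemma \ref{r17}, so I do not anticipate any serious obstacle beyond these bookkeeping issues.
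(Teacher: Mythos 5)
Your proposal is correct and follows essentially the same route as the paper: you pass to $u=1/f$, show it is a non-negative bounded subharmonic function using $\Delta f\le 0$ and $f\ge K>0$, apply Lemma \ref{r17} together with the linear volume growth to force $\int_M|\nabla u|^2=0$, and conclude $f$ is constant, hence $R\equiv\lambda$ by the traced soliton equation. This is precisely the argument given in the paper.
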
 
\begin{proof}
Since $R\leq \lambda$, it follows from equation (\ref{r12}) that $\Delta f $ $\leq 0$. Now
\begin{equation*}
 \Big(\frac{1}{f}\Big)_j = -\Big(\frac{1}{f^2}\Big)f_j,
\end{equation*}

also
\begin{equation*}
 \Big(\frac{1}{f}\Big)_{jj} = \Big(\frac{2}{f^3}\Big)f^2_j -\Big(\frac{1}{f^2}\Big)f_{jj}.
\end{equation*}
Hence
\begin{equation}
\Delta \Big(\frac{1}{f}\Big)=\Big(\frac{2}{f^3}\Big) |\nabla f|^2 -\frac{\Delta f}{f^2}.
\end{equation}
As $\Delta f\leq 0$, it follows that $\Delta (\frac{1}{f})\geq 0$. Since the manifold is of linear volume growth, so the volume of $B(q,r)$, i.e., $V(B(q,r))\leq C_1r$, for some constant $C_1>0$.
Therefore from the equation $(\ref{r16})$, we obtain
\begin{eqnarray*}
\int_{B(q,r)}|\nabla \frac{1}{f}|^2&&\leq \Big(\frac{C}{r^2}\Big)\int_{B(q,2r)}\Big(\frac{1}{f^2}\Big)\\
 && \leq \Big(\frac{C}{r^2K^2}\Big) V(B(q,2r)) \\
 &&\leq \Big(\frac{C}{r^2K^2}\Big)C_1 2r\\
&&\leq \frac{2CC_1}{rK^2} \rightarrow 0  
\end{eqnarray*}
as $r\rightarrow \infty$. Therefore 
\begin{equation}
 \int_{M}|\nabla \frac{1}{f}|^2=0,
\end{equation}
which follows that the function $\frac{1}{f}$ is constant. Consequently the potential function $f$ is constant. Then it follows from (\ref{r12}) that $R= \lambda$, which proves the result.
\end{proof}
\begin{prop}\label{r1}
Let $(M,g,f,\lambda)$ be a gradient Yamabe soliton of dimension n. Then the following relation holds;
 \begin{eqnarray}\label{r2}
 \Delta R =-\frac{1}{n-1}\Big\{   \frac{1}{2} \nabla_lR \nabla_lf + R(R - \lambda \Big \}, 
 \end{eqnarray}
    where R denotes the scalar curvature of $M$. 
\end{prop}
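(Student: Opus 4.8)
The plan is to differentiate the gradient soliton identity $(\ref{r13})$, push the resulting third covariant derivative of $f$ through the Ricci commutation identity so that the curvature tensor enters, trace once to obtain a relation between $\mathrm{Ric}(\nabla f)$ and $\nabla R$, and then take one further divergence, at which point the contracted second Bianchi identity and the soliton identity $(\ref{r13})$ itself re-enter to produce $(\ref{r2})$.

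First I would apply $\nabla_k$ to $(\ref{r13})$, getting $\nabla_k\nabla_i\nabla_j f=(\nabla_k R)\,g_{ij}$, and, after relabelling, $\nabla_i\nabla_k\nabla_j f=(\nabla_i R)\,g_{kj}$. Subtracting, the left-hand side becomes the commutator $[\nabla_k,\nabla_i]\nabla_j f$, which the Ricci identity rewrites in terms of the Riemann tensor contracted with $\nabla f$, yielding an identity of the form
\[
(\nabla_k R)\,g_{ij}-(\nabla_i R)\,g_{kj}=-R_{kij}{}^{m}\nabla_m f .
\]
Tracing over an appropriate pair of indices — so that one metric factor cancels a slot of the curvature tensor while the antisymmetry of the Riemann tensor disposes of the other metric term — collapses this to a relation of the shape
\[
R_{jm}\nabla^m f=-(n-1)\nabla_j R ,\qquad\text{i.e.}\qquad \mathrm{Ric}(\nabla f)=-(n-1)\nabla R ,
\]
the overall sign being the one forced by the curvature/Laplacian conventions in use.

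Next I would take the divergence of this last relation. Writing $\nabla^j\big(R_{jm}\nabla^m f\big)=(\nabla^j R_{jm})\nabla^m f+R_{jm}\nabla^j\nabla^m f$, the first term is rewritten by the contracted second Bianchi identity $\nabla^j R_{jm}=\tfrac12\nabla_m R$, and the second by raising indices in $(\ref{r13})$, which gives $\nabla^j\nabla^m f=(R-\lambda)g^{jm}$ and hence $R_{jm}\nabla^j\nabla^m f=(R-\lambda)R$. The right-hand side is $-(n-1)\Delta R$. Equating the two sides, collecting terms and dividing by $n-1$ then gives exactly $(\ref{r2})$.

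The computations at each stage are routine tensor manipulations, so I expect the only real obstacle to be bookkeeping: getting the sign in the Ricci identity, the convention for the Ricci tensor, and the sign of the Laplacian to line up so that the coefficient $-\tfrac{1}{n-1}$ and the interior term $R(R-\lambda)$ come out with precisely the signs stated in $(\ref{r2})$. One could instead differentiate $(\ref{r13})$ twice and contract twice in one go, but that produces an extra curvature commutator term which then has to be simplified back down, so the two-stage route above — one derivative to reach $\mathrm{Ric}(\nabla f)=-(n-1)\nabla R$, then one divergence — is the cleaner path.
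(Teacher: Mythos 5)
Your proposal is correct and follows essentially the same route as the paper: one covariant derivative of $(\ref{r13})$, the commutation (Ricci) identity to bring in the curvature tensor, a trace giving $R_{il}\nabla_l f=-(n-1)\nabla_i R$, and then a divergence of that relation in which the contracted second Bianchi identity and the soliton equation produce the terms $\tfrac12\nabla_l R\,\nabla_l f$ and $R(R-\lambda)$. No discrepancies with the paper's argument beyond harmless index relabelling and your (appropriate) caution about sign conventions.
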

\begin{proof}
Taking the covariant derivative in $(\ref{r13})$ and using the commutating formula for covariant derivative, we have
\begin{equation*}
g_{jk} \nabla_i R - g_{ik} \nabla_j R = \nabla_i\nabla_j\nabla_k f - \nabla_j\nabla_i\nabla_k f = -R_{ijkl}\nabla_lf.
\end{equation*} 
Taking the trace on $j$ and $k$, we get
\begin{equation*}
g_{jj} \nabla_i R - g_{ij} \nabla_j R = - R_{ijjl}\nabla_lf.
\end{equation*}
This implies
\begin{equation*}
n \nabla_i R - \nabla_i R = - R_{il}\nabla_l f,
\end{equation*}
which yields
\begin{equation}\label{r3}
\nabla_i R = - \frac{1}{(n-1)}R_{il}\nabla_l f.
\end{equation}
Now with the help of equation (\ref{r3}) and contracted second Bianchi identity $\nabla_iR =2 \nabla_jR_{ij} $, we get
\begin{eqnarray*}
\Delta R &=& g^{ij}\nabla_i\nabla_jR\\
&=& - g^{ij}\nabla_i\Big(\frac{1}{n-1} R_{jl} \nabla_lf \Big)\\
&=& - \frac{1}{n-1} g^{ij}\nabla_i ( R_{jl} \nabla_lf) \\
&=& - \frac{1}{n-1}g^{ij} \Big \{ \nabla_i(R_{jl}) \nabla_lf + R_{jl} \nabla_i\nabla_lf \Big \}\\
&=& - \frac{1}{n-1} \Big \{ \frac{1}{2} \nabla_lR\nabla_lf + g^{ij}R_{jl}(Rg_{il}-\lambda g_{il})  \Big \}\\
&=& - \frac{1}{n-1} \Big \{  \frac{1}{2} \nabla_lR\nabla_lf +R(R - \lambda)  \Big \}.
\end{eqnarray*}
Hence, we get our result.
\end{proof}

\section{Yamabe soliton and potential functions}
\begin{thm}\label{r9}
Let $(M,g,f,\lambda)$ be a gradient expanding or steady Yamabe soliton of dimension n with non-negative scalar curvature. If the potential function $f$ satisfies the condition
\begin{equation}\label{r6}
 \int_{M-B(q,r)}\frac{f}{d(x,q)^{2}} < \infty,
 \end{equation}
 where $B(q,r)$ is a ball with radius $r>0$ and center at $q$, then $f$ is subharmonic.
\end{thm}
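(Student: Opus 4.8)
The core of the argument is simply to read the sign of $\Delta f$ off the trace of the soliton equation. By $(\ref{r12})$ one has $\Delta f = n(R-\lambda)$, so everything reduces to the sign of $R-\lambda$. For an expanding ($\lambda<0$) or steady ($\lambda=0$) soliton we have $\lambda\le 0$, hence $-\lambda\ge 0$; together with the hypothesis $R\ge 0$ this gives $R-\lambda = R+(-\lambda)\ge 0$, so $\Delta f\ge 0$, i.e. $f$ is subharmonic. This already proves the assertion. In the steady case $\lambda=0$, so $\Delta f = nR$, and $f$ is harmonic precisely when $R\equiv 0$.

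Worth noting: the integral hypothesis $(\ref{r6})$ plays no role in the argument just given --- subharmonicity is immediate from $(\ref{r12})$ and the two sign conditions. I expect $(\ref{r6})$ is really there to push the steady case all the way to harmonicity (the ``in particular'' of the abstract). The plan for that would be: first rewrite $(\ref{r6})$ dyadically --- finiteness of $\int_{M-B(q,r)} f\,d(x,q)^{-2}$ forces $r^{-2}\int_{B(q,2r)\setminus B(q,r)} f \to 0$ along a sequence $r\to\infty$, and summing over the annuli, $\int_{B(q,r)} f = o(r^{2})$. Then combine this sub-quadratic $L^{1}$-growth of $f$ with $\Delta f = nR\ge 0$: either via Lemma $\ref{r17}$ (a Caccioppoli estimate, which with an additional boundedness of $f$ upgrades the $L^{1}$ bound to the $L^{2}$ bound it wants), or via a Karp-type integration by parts of $\Delta f$ against the square of a radial cutoff $\phi=\psi(d(\cdot,q))$ that is $1$ on $B(q,r)$ and supported in $B(q,2r)$, to conclude $\int_{M} R = 0$ and hence $R\equiv 0$, i.e. $f$ harmonic.

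The genuinely delicate point is this last step rather than the subharmonicity itself. In the integration by parts one must control $\int_{M} f\,\Delta(\phi^{2})$, and this needs an upper bound on $\Delta\phi$, not merely on $|\nabla\phi|$; since $\Delta\phi = \psi''(d)+\psi'(d)\,\Delta d$ involves the Laplacian of the distance function, one needs a Laplacian-comparison bound, which in turn asks for a lower Ricci bound --- this is exactly why the natural hypothesis is non-negative \emph{Ricci} curvature, as stated in the abstract, rather than merely non-negative scalar curvature. I would therefore organise the write-up as: (i) the one-line subharmonicity from $(\ref{r12})$; (ii) the dyadic reformulation of $(\ref{r6})$; (iii) the cutoff/Caccioppoli step, using Laplacian comparison under a Ricci bound, to force $R\equiv 0$ and conclude harmonicity in the steady case; and along the way I would double-check the ``scalar'' versus ``Ricci'' discrepancy between the theorem statement and the abstract.
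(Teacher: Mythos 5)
Your one-line argument is correct and, for the statement as written, sharper than the paper's: with $\lambda\le 0$ (expanding or steady) and $R\ge 0$, equation (\ref{r12}) gives $\Delta f=n(R-\lambda)\ge 0$ at once, so subharmonicity holds and the integral hypothesis (\ref{r6}) is indeed superfluous for this conclusion. The paper instead takes exactly the heavier route you sketch as your step (iii): it uses the Cheeger--Colding cutoff $\psi_r$ supported in $B(q,2r)$ with $|\nabla\psi_r|^2\le C/r^2$ and $\Delta\psi_r\le C/r^2$, discards the term $\lambda\int\psi_r\le 0$, integrates by parts to obtain $0\le\int\psi_r R\le\frac{1}{n}\int_{B(q,2r)\setminus B(q,r)}f\,\Delta\psi_r\le\frac{C}{nr^2}\int_{B(q,2r)\setminus B(q,r)}f\to 0$ (on the annulus $d(x,q)\ge r$, so this tail is controlled by (\ref{r6})), concludes $R\equiv 0$, and only then reads off $\Delta f=-n\lambda\ge 0$ from (\ref{r12}). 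So the paper is really proving the stronger fact $R\equiv 0$ --- which is what its steady-case corollary on harmonicity uses --- while you get the stated subharmonicity for free; your proposed plan for upgrading to harmonicity is essentially the paper's actual proof. Your side remarks also hit the genuine weak points of that argument: the annulus estimate $f\,\Delta\psi_r\le Cf/r^2$ and the use of (\ref{r6}) to kill the tail implicitly require $f\ge 0$, and the existence of cutoffs with the Laplacian bound rests on Laplacian comparison, i.e.\ a lower Ricci bound, which explains the discrepancy you flag between the theorem's ``non-negative scalar curvature'' and the abstract's ``non-negative Ricci curvature.''
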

\begin{proof}
Taking trace in (\ref{r11}), we get
\begin{equation}\label{r7}
(R-\lambda)n=\Delta f.
\end{equation}
Let us consider the cut-off function, introduced in \cite{CC1996}, $\psi_r\in C^\infty_0(B(q,2r))$ for $r>0$ such that
\[ \begin{cases} 
	  0\leq \psi_r\leq 1 &\text{ in }B(q,2r)\\
      \psi_r=1  & \text{ in }B(q,r) \\
      |\nabla \psi_r|^2\leq\frac{C}{r^2}& \text{ in }B(q,2r) \\
      \Delta \psi_r\leq \frac{C}{r^2} &  \text{ in }B(q,2r),
   \end{cases}
\]
where $C>0$ is a constant.
Then as $r\rightarrow\infty$, we get $\Delta \psi_r\rightarrow 0$ as $\Delta \psi_r\leq \frac{C}{r^2}$. Now using (\ref{r7}) and integration by parts, we obtain
\begin{eqnarray*}
0\leq \int_{B(q,2r)}\psi_rR = \int_{B(q,2r)} \psi_r (\lambda + \frac{1}{n} \Delta f) &=& \int_{B(q,2r)} \psi_r \lambda + \frac{1}{n} \int_{B(q,2r)} \psi_r \Delta f\\
&\leq& \frac{1}{n} \int_{B(q,2r)-B(q,r)}f \Delta \psi_r\\
&\leq& \frac{1}{n} \int_{B(q,2r)-B(q,r)}\frac{C}{r^2}f \rightarrow 0,
\end{eqnarray*} 
as $r\rightarrow \infty$. Since $\psi_r =1$ in $B(q,r)$, so from above inequality we have $R=0$.
Then from equation (\ref{r12}), we get
\begin{equation*}
\Delta f \geq 0,
\end{equation*}
This completes the proof.
\end{proof}
By the Lemma $\ref{r17}$ and Theorem $\ref{r9}$, we get the following corollary: 
\begin{cor}
Let $(M,g,f,\lambda)$ be a gradient expanding or steady Yamabe soliton of dimension n with non-negative scalar curvature. If a non-negative  potential function $f$ satisfies the condition
\end{cor}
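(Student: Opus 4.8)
The plan is to chain together the two preceding results in exactly the manner of the proof of Theorem~\ref{r5}. First I would invoke Theorem~\ref{r9}: since $(M,g,f,\lambda)$ is a gradient expanding or steady Yamabe soliton with non-negative scalar curvature and $f$ satisfies the integral bound $(\ref{r6})$, that theorem already delivers $R\equiv 0$, and then $(\ref{r12})$ forces $\Delta f=(R-\lambda)n=-\lambda n\geq 0$. Thus $f$ is a non-negative subharmonic function on all of $M$, hence on every ball $B(q,2r)$, which puts us precisely in the hypothesis of Lemma~\ref{r17}.

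Next I would apply Lemma~\ref{r17} to $f$ on $B(q,2r)$, giving
\begin{equation*}
\int_{B(q,r)}|\nabla f|^2 \leq \frac{C}{r^2}\int_{B(q,2r)}f^2,
\end{equation*}
and then use the remaining hypothesis on $f$ (the $L^2$-type / volume-growth condition stated in the corollary) to drive the right-hand side to $0$ as $r\to\infty$. If the hypothesis is $f\in L^2(M)$, then $\int_{B(q,2r)}f^2$ stays bounded and the factor $C/r^2$ kills it; if instead it is linear volume growth $V(B(q,2r))\leq 2C_1 r$ together with an upper bound $f\leq K'$, then $\int_{B(q,2r)}f^2\leq (K')^2\,V(B(q,2r))\leq 2C_1(K')^2 r$, so $\frac{C}{r^2}\cdot 2C_1(K')^2 r=\frac{2CC_1(K')^2}{r}\to 0$. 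Either way, letting $r\to\infty$ yields $\int_M |\nabla f|^2=0$.

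Finally, $\int_M|\nabla f|^2=0$ forces $\nabla f\equiv 0$, so $f$ is constant; then $(\ref{r12})$ gives $R=\lambda$, and combining with $R\equiv 0$ from the first step we get $\lambda=0$. Hence the soliton is trivial and steady, and in particular $M$ has constant scalar curvature $R=0$. The only point that requires genuine care—and it is the same subtlety as in Theorem~\ref{r5}—is arranging the hypothesis on $f$ so that it is exactly strong enough to guarantee $r^{-2}\int_{B(q,2r)}f^2\to 0$; once that is in place, the corollary is an immediate consequence of Lemma~\ref{r17} and Theorem~\ref{r9}.
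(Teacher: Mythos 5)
The first part of your argument --- Theorem \ref{r9} gives $R\equiv 0$, so by (\ref{r12}) $\Delta f=-\lambda n\geq 0$ since $\lambda\leq 0$, hence the non-negative $f$ is subharmonic and Lemma \ref{r17} applies --- is exactly the paper's (implicit) proof, and it already establishes the corollary, whose conclusion is only the Caccioppoli-type inequality $\int_{B(q,r)}|\nabla f|^2\leq\frac{C}{r^2}\int_{B(q,2r)}f^2$. Everything after that point (driving the right-hand side to zero, concluding $f$ constant and $\lambda=0$) goes beyond what is claimed and would require hypotheses the corollary does not assume, such as $f\in L^2(M)$ or a bound on $f$ together with linear volume growth, so you should simply stop once Lemma \ref{r17} has been applied.
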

\begin{equation}\label{r23}
  \int_{M-B(q,r)}\frac{f}{d(x,q)^{2}} < \infty,
 \end{equation}
 where $B(q,r)$ is a ball with radius $r>0$ and center at $q$, then $f$ satisfies the inequality
  \begin{equation*}
  \int_{B(q,r)}|\nabla f|^2 \leq\frac{C}{r^2}\int_{B(q,2r)}f^2.
  \end{equation*}
 \begin{cor}
Let $(M,g,f,\lambda)$ be a gradient steady Yamabe soliton of dimension n with non-negative scalar curvature. If the potential function $f$ satisfies
\begin{equation}\label{r25}
 \int_{M-B(q,r)}\frac{f}{d(x,q)^{2}} < \infty,
 \end{equation}
 where $B(q,r)$ is a ball with radius $r>0$ and center at $q$, then $f$ is harmonic.
\begin{proof}
In the above theorem we get the scalar curvature $R=0$, hence for steady Yamabe soliton we have $\Delta f =0$, i.e., $f$ is harmonic. 
\end{proof}
\end{cor}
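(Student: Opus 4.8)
The plan is to derive this statement as an immediate consequence of Theorem \ref{r9}. I would first observe that a steady Yamabe soliton is precisely the case $\lambda=0$ of the hypothesis appearing in Theorem \ref{r9}, and that all remaining hypotheses---non-negative scalar curvature together with the integral decay condition (\ref{r25})---coincide verbatim with those of that theorem. Hence Theorem \ref{r9} applies without modification and, in particular, its proof (via the cut-off function $\psi_r$ and the inequality $0\leq\int_{B(q,2r)}\psi_r R\to 0$) forces the scalar curvature to vanish identically, $R=0$.

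Next I would feed this information into the traced soliton equation (\ref{r12}), namely $\Delta f=(R-\lambda)n$. Setting $\lambda=0$ for the steady case and substituting $R=0$ gives at once $\Delta f=0$, so that $f$ is harmonic. This is the entire content of the argument, and no separate integration or estimate is needed beyond what Theorem \ref{r9} already supplies.

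The argument presents no genuine obstacle, being a direct specialization of the already-established Theorem \ref{r9}. The only conceptual point worth emphasizing is that the steady condition is exactly what upgrades the conclusion from \emph{subharmonic} (which is all one obtains in the general expanding or steady case, since $\lambda\leq 0$ yields $\Delta f=(R-\lambda)n\geq 0$) to \emph{harmonic}: once the integral hypothesis has pinned $R$ to zero, the vanishing of $\lambda$ removes any slack and forces $\Delta f$ to equal zero rather than merely being non-negative.
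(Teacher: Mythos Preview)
Your proposal is correct and follows essentially the same approach as the paper: invoke Theorem \ref{r9} to conclude $R=0$, then substitute $R=0$ and $\lambda=0$ into the traced equation (\ref{r12}) to obtain $\Delta f=0$. The paper's own proof is a one-line version of exactly this reasoning.
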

 \begin{cor}
 Let $(M,g,f,\lambda)$ be a gradient shrinking Yamabe soliton of dimension n with non-negative scalar curvature and $\lambda \leq \Delta \psi$, for some smooth function $\psi$. If the potential function $f$ satisfies
 \begin{equation}\label{r27}
  \int_{M-B(q,r)}\frac{f}{d(x,q)^{2}} < \infty,
  \end{equation}
  where $B(q,r)$ is a ball with radius $r>0$ and center at $q$, then $f$ is superharmonic.
\end{cor}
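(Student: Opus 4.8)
The plan is to follow the scheme of the proof of Theorem \ref{r9}: first show that the scalar curvature vanishes identically, and then read off superharmonicity of $f$ from the trace equation \eqref{r12}. Since the soliton is shrinking we have $\lambda>0$, while $R\ge 0$ by hypothesis; the auxiliary function $\psi$ with $\lambda\le\Delta\psi$ is introduced precisely to replace the sign estimate $\int\psi_r\lambda\le 0$ that was available in the expanding/steady case but fails here.

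Concretely, I would again take the cut-off functions $\psi_r\in C_0^\infty(B(q,2r))$ of \cite{CC1996} used in Theorem \ref{r9}, and begin from $0\le\int_{B(q,2r)}\psi_r R$, which holds because $R\ge 0$ and $\psi_r\ge 0$. Writing $R=\lambda+\frac{1}{n}\Delta f$ via \eqref{r12}, using $\lambda\le\Delta\psi$ together with $\psi_r\ge 0$, and integrating by parts twice (there being no boundary terms, as $\psi_r$ has compact support) one gets
\[
0\le\int_{B(q,2r)}\psi_r R\le\int_{B(q,2r)}\psi\,\Delta\psi_r+\frac{1}{n}\int_{B(q,2r)}f\,\Delta\psi_r .
\]
Since $\psi_r\equiv 1$ on $B(q,r)$, the function $\Delta\psi_r$ is supported in the annulus $B(q,2r)-B(q,r)$, where $d(x,q)\ge r$ forces $|\Delta\psi_r|\le C/r^2\le 4C/d(x,q)^2$. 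Hence the last integral is, up to a constant, bounded by $\int_{M-B(q,r)}\frac{f}{d(x,q)^2}$, which tends to $0$ as $r\to\infty$ by \eqref{r27}. Letting $r\to\infty$ and using $\psi_r\equiv 1$ on $B(q,r)$ then yields $\int_{B(q,\rho)}R\le 0$ for every fixed $\rho>0$; with $R\ge 0$ this forces $R\equiv 0$ on $M$. Finally \eqref{r12} gives $\Delta f=-n\lambda\le 0$ because $\lambda>0$, so $f$ is superharmonic.

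The step I expect to be the main obstacle is controlling the term $\int_{B(q,2r)}\psi\,\Delta\psi_r$ in the limit. The $f$-term is disposed of by the decay hypothesis \eqref{r27} exactly as in Theorem \ref{r9}, but $\psi$ is only assumed to be a smooth function with $\lambda\le\Delta\psi$, with no a priori control on its growth; to make the estimate above rigorous one needs, in addition, that $\psi$ does not grow too fast relative to the volume of $M$ — for instance $\psi$ bounded together with sub-quadratic volume growth, or a decay condition of the form $\int_{M-B(q,r)}\frac{|\psi|}{d(x,q)^2}<\infty$. Under such an extra assumption on $\psi$ the argument goes through verbatim.
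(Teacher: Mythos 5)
Your argument is essentially the paper's own proof: the same cut-off functions $\psi_r$, the same chain $0\le\int\psi_r R=\int\psi_r\lambda+\frac1n\int\psi_r\Delta f\le\int\psi\,\Delta\psi_r+\frac1n\int f\,\Delta\psi_r$ over the annulus, the conclusion $R\equiv 0$, and then $\Delta f=-n\lambda\le 0$ from \eqref{r12}. The obstacle you single out is genuine and is present in the paper as well: in the final step the paper bounds the $\psi$-contribution by $\int_{B(q,2r)-B(q,r)}\frac{C}{r^2}\,\psi$ and simply asserts that it tends to $0$ as $r\to\infty$, even though the hypotheses give no growth, sign, or integrability control on $\psi$ (the decay condition \eqref{r27} only handles the $f$-term); so an additional assumption of the kind you propose, e.g.\ $\int_{M-B(q,r)}\frac{|\psi|}{d(x,q)^{2}}<\infty$ or boundedness of $\psi$ together with suitable volume growth, is indeed needed to make the limit argument rigorous, and the same remark applies to the implicit use of $f\ge 0$ and $\psi\ge 0$ when replacing $\Delta\psi_r$ by the one-sided bound $C/r^{2}$.
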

\begin{proof}
Consider the cut-off function as in Theorem $\ref{r9}$. Now using $(\ref{r7})$ and integration by parts, we get

\begin{eqnarray*}
0\leq \int_{B(q,2r)}\psi_rR = \int_{B(q,2r)} \psi_r (\lambda + \frac{1}{n} \Delta f) &= & \int_{B(q,2r)} \psi_r \lambda + \frac{1}{n} \int_{B(q,2r)} \psi_r \Delta f\\
&\leq& \int_{B(q,2r)}\psi_r \Delta \psi + \frac{1}{n} \int_{B(q,2r)}\psi_r \Delta f\\
&\leq& \int_{B(q,2r)-B(q,r)}\psi \Delta \psi_r + \frac{1}{n} \int_{B(q,2r)-B(q,r)} f \Delta \psi_r\\
&\leq& \int_{B(q,2r)-B(q,r)}\frac{C}{r^2} \big(\frac{f}{n}+\psi \big) \rightarrow 0,
\end{eqnarray*} 
as $r\rightarrow \infty$. Since $\psi_r =1$ in $B(q,r)$, so from above inequality we have $R=0$.
Then from equation (\ref{r12}), we get
\begin{equation*}
\Delta f \leq 0,
\end{equation*}
which proves the result.
\end{proof}
In \cite{ABR2011} Aquino et. al proved a result for gradient Ricci soliton. We have proved the same result for gradient Yamabe soliton as follows:
\begin{thm}\label{r21}
Let $(M,g,X,\lambda)$ be a gradient Yamabe soliton of dimension n which is compact, with potential function $f$. Then upto a constant, $f$ agrees with the Hodge-de Rham potential $h$.
\end{thm}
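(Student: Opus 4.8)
The plan is to exploit the Hodge--de Rham decomposition of the gradient field $X=\nabla f$ together with the divergence theorem on the closed manifold $M$. Write the decomposition guaranteed by the Hodge--de Rham theorem as
\begin{equation*}
\nabla f = Y + \nabla h, \qquad \operatorname{div}Y = 0,
\end{equation*}
where $h$ is the Hodge--de Rham potential. Since $M$ is connected, establishing that $f$ agrees with $h$ up to an additive constant is equivalent to proving $\nabla f=\nabla h$, that is, $Y\equiv 0$; this is the statement I would aim for.

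The key step is to show $\int_M|Y|^2\,dV=0$. First I would substitute $Y=\nabla f-\nabla h$ into $|Y|^2=\langle Y,\nabla f-\nabla h\rangle$ and integrate over $M$. For the first resulting term, observe that $\operatorname{div}(fY)=\langle\nabla f,Y\rangle+f\operatorname{div}Y=\langle\nabla f,Y\rangle$ because $\operatorname{div}Y=0$; since $M$ is compact and without boundary, the divergence theorem gives $\int_M\langle\nabla f,Y\rangle\,dV=0$. The identical argument applied to the vector field $hY$ yields $\int_M\langle\nabla h,Y\rangle\,dV=0$. Hence $\int_M|Y|^2\,dV=0$, so $Y\equiv0$, and therefore $\nabla f=\nabla h$, which gives $f=h+\mathrm{const}$ on the connected manifold $M$.

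An equivalent route, which I would mention as a remark, is to take the divergence of the decomposition directly: $\Delta f=\operatorname{div}(\nabla f)=\operatorname{div}Y+\Delta h=\Delta h$, and by \eqref{r12} this equals $n(R-\lambda)$; thus $\Delta(f-h)=0$ on the closed manifold $M$, which forces $f-h$ to be constant. In either approach I do not expect a serious obstacle: the only genuine subtlety is that the Hodge--de Rham potential $h$ is itself determined only up to an additive constant (it is characterized by $\Delta h=\operatorname{div}X$), which is precisely the reason the conclusion can assert agreement only \emph{up to a constant}. The one point requiring a little care is the justification of the integrations by parts, which rests on the compactness of $M$ and the absence of boundary, but this is entirely standard.
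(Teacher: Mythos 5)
Your proposal is correct, and in fact it contains the paper's argument verbatim as your closing ``equivalent route'': the paper simply combines $\operatorname{div}X=\Delta h$ from the Hodge--de Rham decomposition with the traced soliton equation $\Delta f=n(R-\lambda)=\operatorname{div}X$ to get $\Delta(f-h)=0$, and then invokes compactness to conclude $f-h$ is constant. Your primary route is slightly different in mechanics: you prove directly that the divergence-free part $Y=\nabla f-\nabla h$ vanishes by computing $\int_M|Y|^2\,dV=\int_M\langle\nabla f,Y\rangle\,dV-\int_M\langle\nabla h,Y\rangle\,dV=0$ via $\operatorname{div}(fY)=\langle\nabla f,Y\rangle$ and the divergence theorem. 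This is just the $L^2$-orthogonality of gradient fields against divergence-free fields on a closed manifold, and it is sound. What it buys is twofold: it makes explicit the integration-by-parts step that the paper leaves implicit when passing from $\Delta(f-h)=0$ to $f-h$ constant (a harmonic function on a compact manifold is constant precisely by this kind of integration), and it exposes that the soliton equation plays no essential role --- the conclusion holds for any gradient field $X=\nabla f$ on a compact manifold, since the scalar-curvature terms cancel identically in the paper's computation as well. The paper's route, by contrast, is shorter on the page and keeps the statement tied to the soliton identity $(R-\lambda)n=\Delta f=\Delta h$, but proves nothing more.
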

\begin{proof}
For Yamabe soliton $(M,g,X,\lambda)$, it follows that
\begin{equation}
(R-\lambda)n=divX.
\end{equation}
Now with the help of Hodge-de Rham decomposition we get $divX=\Delta h$, and hence
\begin{equation}\label{r18}
(R-\lambda)n=\Delta h.
\end{equation}
Again from $(\ref{r11})$ we have
 \begin{equation}\label{r19}
 \Delta f = (R-\lambda)n.
 \end{equation}
 Now, from equations $(\ref{r18})$ and $(\ref{r19})$, we get 
 \begin{equation}
 \Delta(f-h)=0.
 \end{equation}
 Therefore, we have $f=h+C$, for some constant C, which completes the proof of the theorem.
\end{proof}
\section{acknowledgment}
 The second author gratefully acknowledges to the
 CSIR(File No.:09/025(0282)/2019-EMR-I), Govt. of India for financial assistance.

\end{document}